\definecolor{dunkelgrau}{rgb}{0.8,0.8,0.8}
\definecolor{hellgrau}{rgb}{0.9,0.9,0.9}
\newcommand{\filt}{\ensuremath{\mathfrak{F}}}
\newcommand{\R}{\ensuremath{\mathbb{R}}}
\newcommand{\E}{\ensuremath{\mathbb{E}}}
\renewcommand{\P}{\ensuremath{\mathbb{P}}}
\newcommand{\oR}{\ensuremath{\overline{\mathbb{R}}}}
\newcommand{\ones}{\ensuremath{\mathbbm{1}}}
\renewcommand{\>}{\right\rangle}
\newcommand{\<}{\left\langle}
\renewcommand{\d}{\ensuremath{\,\text{d}}}
\newcommand{\id}{\ensuremath{\text{Id}}}
\newcommand{\bx}{\ensuremath{\overline{x}}}
\newcommand{\bv}{\ensuremath{\overline{v}}}
\newcommand{\bp}{\ensuremath{\overline{p}}}
\renewcommand{\bv}{\ensuremath{\overline{v}}}
\newcommand{\expreturn}{\ensuremath{\mu^*}}
\newcommand{\g}{\ensuremath{\mathcal{G}}}
\newcommand{\h}{\ensuremath{\mathcal{H}}}
\newcommand{\q}{\ensuremath{\mathcal{Q}}}
\newcommand{\X}{\ensuremath{\mathcal{X}}}
\newcommand{\proj}{\ensuremath{\mathcal{P}}}
\renewcommand{\Box}{\ensuremath{\mbox{\small$\,\square\,$}}}
\theoremstyle{plain}
\newtheorem{theorem}{Theorem}[section]
\newtheorem{proposition}[theorem]{Proposition}
\theoremstyle{definition}
\newtheorem{remark}{Remark}[section]
\newtheorem{definition}{Definition}[section]
\newtheorem{algorithm}{Algorithm}[section]
\newtheorem{assumption}{Assumption}[section]
\DeclareMathOperator*\dom{dom}%
\DeclareMathOperator*\argmin{arg\,min}%
\DeclareMathOperator*\Prox{Prox}%
\DeclareMathOperator*\cvar{CVaR}%
\DeclareMathOperator*\varisk{VaR}%
\DeclareMathOperator*\essinf{essinf}%
\DeclareMathOperator*\esssup{essup}%
\numberwithin{equation}{section}  
\title{Convex risk minimization via proximal splitting methods}
\author{Radu Ioan Bo\c t
\thanks{Department of Mathematics, Chemnitz University of Technology, D-09107 Chemnitz, Germany, e-mail: radu.bot@mathematik.tu-chemnitz.de. Research partially supported by DFG (German Research Foundation), project BO 2516/4-1.}
\and Christopher Hendrich
\thanks{Department of Mathematics, Chemnitz University of Technology, D-09107 Chemnitz, Germany, e-mail: christopher.hendrich@mathematik.tu-chemnitz.de. Research supported by a Graduate Fellowship of the Free State Saxony, Germany.}}
\date{\today}
\begin{document}
\maketitle

{\bf Abstract.} In this paper we investigate the applicability of a recently introduced primal-dual splitting method in the context of solving portfolio optimization problems which assume the minimization of risk measures associated to different convex utility functions. We show that, due to the splitting characteristic of the used primal-dual method, the main effort in implementing it constitutes in the calculation of the proximal points of the utility functions, which assume explicit expressions in a number of cases. When quantifying risk via the meanwhile classical Conditional Value-at-Risk, an alternative approach relying on the use of its dual representation is presented as well. The theoretical results are finally illustrated via some numerical experiments on real and synthetic data sets.

{\bf Keywords.} portfolio optimization, convex risk measure, utility function, primal-dual proximal splitting algorithm

{\bf AMS subject classification.} 90C25, 90C46, 47A52

\section{Introduction and preliminaries}\label{sectionIntro}

In financial mathematics, quantifying the risk of future random outcomes is an essential concern for decision makers who, of course, have their own attitude towards risk. In the classical portfolio theory by Markowitz, the variance was used to measure the risk of future returns. However, subsequent developments in the theory of risk measures showed that the variance does not have desirable properties, one major drawback being given by the fact that it measures deviations of the random variable in both directions, i.\,e. it penalizes losses and gains in the same way.

When dealing with uncertainty, modern risk measures build on the wide-spread recognition that asymmetry is a desirable property since investors have different positions on downside and upside outcomes. The first axiomatic way of defining risk measures has been given by Artzner, Delbaen, Eber and Heath in \cite{Artetal99} and refers to \textit{coherent risk measures}. Nevertheless, it has become a standard in modern risk management to assess the riskiness of a portfolio by means of \textit{convex risk measures}, introduced by F\"ollmer and Schied in \cite{FoeSch}, as well as of \textit{convex deviation measures}, introduced by Rockafellar, Uryasev and Zabarankin in \cite{RUZ06}. The convex deviation measures are connected with the risk measures when those are applied to the difference between a random variable and its expectation, instead of to the random variable itself, both being designed to be used in risk analysis. However, deviation measures evaluate uncertainty in the form of nonconstancy, whereas risk measures can be understood as estimates of capital requirements for future net worths.

In this paper we give a unifying framework for solving portfolio optimization problems assuming the minimization of a general convex risk functional subject to  constraints on the expected return of the portfolio and on the budget. The convex risk measure in the objective is expressed in terms of the Optimized Certainty Equivalent (OCE), a fundamental concept for quantifying risk by means of a utility function introduced by Ben-Tal and Teboulle in \cite{BenTeb86} (see also \cite{BenTeb07}). For the particular case of the Conditional Value-at-Risk an alternative approach involving its dual representation is also considered. The approach we propose in this paper assumes the solving of these constrained optimization problems, which in their majority have an nondifferentiable objective function with an intricate formulation, via a primal-dual proximal splitting method which has been recently introduced in \cite{BotHend12e}. In the last years one can notice an increasing interest
 in primal-dual algorithms when solving nondifferentiale convex optimization problems (see, for instance, \cite{BotCseHein12,BotHend12,BotHend12e,BriCom11,ChaPoc11,ComPes12,Con12,Vu11}), as they achieve a full splitting which assumes a separate evaluation of each function via its proximal points, while the occurring linear continuous operators and their adjoints are only evaluated via forward steps. Most primal-dual algorithms also allow inexact evaluations of the proximal points  which, however, can have a negative impact on the overall performance. On the other hand, in a lot of applications, as it will be also the case for majority of the convex risk measures considered in this paper, exact implementations of the proximal operators are possible. We refer to the mentioned literature for applications of the primal-dual methods in image and signal processing, location theory and machine learning.

The structure of the paper is the following. In the remaining of this subsection we give some elements of convex analysis, present the primal-dual proximal algorithm along with its convergence behaviour and introduce the necessary apparatus for defining convex risk measures. Section 2 is dedicated to the formulation of the portfolio optimization problem to be solved, when the risk is quantified via the Optimized Certainty Equivalent, and to investigations on the applicability of the primal-dual method in this context. In Section 3 an alternative approach for solving the portfolio optimization problem having as objective the Conditional Value-at-Risk is presented. We illustrate the applicability of the proposed primal-dual method in the context of portfolio optimization problems in Section 4 by some numerical experiments on real and synthetic data.

\subsection{Convex analysis}\label{subsec21}
Let $\h$ be a real Hilbert space with \textit{inner product} $\left\langle \cdot ,\cdot \right\rangle$ and associated \textit{norm} $\left\| \cdot \right\| = \sqrt{\left\langle \cdot, \cdot \right\rangle}$. The symbol $\R_{++}$ denotes the set of strictly positive real numbers and $\R_+ := \R_{++} \cup \{0\}$. For a given set $S \subseteq \h$, the function $\delta_S : \h \rightarrow \overline \R := \R \cup \{\pm \infty\}$, defined by $\delta_S(x) = 0$ for $x \in S$ and $\delta_S(x) = +\infty$, otherwise, denotes its \textit{indicator function}. For a function $f: \h \rightarrow \oR$ we denote by $\dom f := \left\{ x \in \h : f(x) < +\infty \right\}$ its \textit{effective domain} and call $f$ \textit{proper} if $\dom f \neq \varnothing$ and $f(x)>-\infty$ for all $x \in \h$. Let be
$$\Gamma(\h) := \{f: \h \rightarrow \oR: f \ \mbox{is proper, convex and lower semicontinuous}\}.$$
The \textit{conjugate function} of $f$ is $f^*:\h \rightarrow \oR$, $f^*(p)=\sup{\left\{ \left\langle p,x \right\rangle -f(x) : x\in\h \right\}}$ for all $p \in \h$ and, if $f \in \Gamma(\h)$, then $f^* \in \Gamma(\h)$, as well. The \textit{(convex) subdifferential} of $f: \h \rightarrow \oR$ at $x \in \h$ is the set $\partial f(x) = \{p \in \h : f(y) - f(x) \geq \left\langle p,y-x \right\rangle \ \forall y \in \h\}$, if $f(x) \in \R$, and is taken to be the empty set, otherwise. For a linear continuous operator $L: \h \rightarrow \g$, the operator $L^*: \g \rightarrow \h$, defined via $\< Lx,y  \> = \< x,L^*y  \>$ for all $x \in \h$ and all $y \in \g$, denotes its \textit{adjoint}.

Having two proper functions $f,\,g : \h \rightarrow \oR$, their \textit{infimal convolution} is defined by $f \Box g : \h \rightarrow \oR$, $(f \Box g) (x) = \inf_{y \in \h}\left\{ f(y) + g(x-y) \right\}$ for all $x \in \h$, being a convex function when $f$ and $g$ are convex.  The parallel sum of the subdifferentials of $f$ and $g$, seen as set-valued operators, is defined as $\partial f \Box \partial g : \h \rightrightarrows \h$, $(\partial f \Box \partial g) (x) = \{p \in \h : x \in (\partial f)^{-1}(p) + (\partial g)^{-1}(p)\}$, where $(\partial f)^{-1}(p) = \{x \in \h: p \in \partial f(x)\}$. One has that $(\partial f \Box \partial g)(x) \subseteq \partial (f \Box g)(x)$ for every $x \in \h$. For $f \in \Gamma(\h)$, its subdifferential $\partial f :  \h \rightrightarrows \h$ is a  maximally monotone operator (cf. \cite{Roc70}) and by $\Prox_{f}(x)$ we denote the \textit{proximal point} of $f$ at $x\in\h$, representing the unique optimal solution of the optimization problem
\begin{align}\label{prox-def}
\inf_{y\in \h}\left \{f(y)+\frac{1}{2}\|y-x\|^2\right\}.
\end{align}
For every $\gamma \in \R_{++}$  we have \textit{Moreau's decomposition formula} (cf. \cite[Theorem 14.3]{BauCom11})
\begin{align}
	\label{res-indentity}
	\id = \Prox\nolimits_{\gamma f} + \gamma \Prox\nolimits_{\gamma^{-1}f^*} \circ \gamma^{-1}\id,
\end{align}
where $\id$ denotes the identity operator on $\h$.  When $S \subseteq \h$ is a nonempty convex and closed set, the proximal point of $\delta_S$ at $x \in \h$ is
$$\Prox\nolimits_{\delta_S}(x) =\proj_S(x) = \argmin_{y \in S} \frac{1}{2}\|y-x\|^2,$$
being nothing else than the \textit{projection} of $x$ on $S$.

For $\h$ and $\g_i$, $i=1,...,m$, given real Hilbert spaces, $f \in \Gamma(\h)$, $g_i, l_i \in \Gamma(\g_i)$, $i=1,...,m$, and $L_i:\h \rightarrow \g_i$, $i=1,...,m,$ nonzero linear continuous operators we consider the convex optimization problem
\begin{align*}
	(P) \quad \inf_{x \in \h}{\left\{f(x)+\sum_{i=1}^m (g_i \Box l_i)(L_ix) \right\}}
\end{align*}
and its Fenchel-type conjugate dual problem (see, for instance, \cite{Bot10, BotHend12e, ComPes12})
\begin{align*}
	(D) \quad \sup_{(v_1,\ldots,v_m) \in \g_1\times\ldots\times\g_m}{\left\{-f^*\left(- \sum_{i=1}^m L_i^*v_i\right) - \sum_{i=1}^m \left( g_i^*(v_i) + l_i^*(v_i) \right) \right\} }.
\end{align*}
By denoting with $v(P)$ and $v(D)$ the optimal objective values of the problems $(P)$ and $(D)$, respectively, in general one has weak duality, i.e., $v(P) \geq v(D)$. Strong duality, which is the situation when $v(P)=v(D)$ and the dual problem $(D)$ has an optimal solution, holds, when some appropriate qualification condition is fulfilled. The following error-tolerant proximal splitting algorithm which is suitable for simultaneously solving the problems $(P)$ and $(D)$ was given in \cite[Algorithm 3.1]{BotHend12e}. Its competitiveness is emphasized by several numerical experiments in the context of location and image processing problems, also in comparison to other recently introduced iterative schemes.

\begin{algorithm}\label{risk_alg1} \text{ }\newline
Let $x_0 \in \h$, $(v_{1,0}, \ldots, v_{m,0}) \in \g_1 \times \ldots \times \g_m$ and $\tau$ and $\sigma_i$, $i=1,...,m,$ be strictly positive real numbers such that
$$\tau \sum_{i=1}^m \sigma_i \|L_i\|^2 < 4. $$
Furthermore, let $(\lambda_n)_{n\geq 0}$ be a sequence in $(0,2)$, $(a_n)_{n\geq 0}$ a sequence in $\h$, $(b_{i,n})_{n\geq 0}$ and $(d_{i,n})_{n\geq 0}$ sequences in $\g_i$ for all $i=1,\ldots,m$
and set
	\begin{align}\label{risk_A1}
	  \left(\forall n\geq 0\right) \begin{array}{l}  \left\lfloor \begin{array}{l}
		p_{1,n} = \Prox_{\tau f}\left( x_n - \frac{\tau}{2} \sum_{i=1}^m L_i^* v_{i,n} \right) + a_n \\
		w_{1,n} = 2p_{1,n} - x_n \\
		\text{For }i=1,\ldots,m  \\
				\left\lfloor \begin{array}{l}
					p_{2,i,n} = \Prox_{\sigma_i g_i^*}\left(v_{i,n} +\frac{\sigma_i}{2} L_i w_{1,n} \right) + b_{i,n} \\
					w_{2,i,n} = 2 p_{2,i,n} - v_{i,n} \\
				\end{array} \right.\\
		z_{1,n} = w_{1,n} - \frac{\tau}{2} \sum_{i=1}^m L_i^* w_{2,i,n} \\
		x_{n+1} = x_n + \lambda_n ( z_{1,n} - p_{1,n} ) \\
		\text{For }i=1,\ldots,m  \\
				\left\lfloor \begin{array}{l}
					z_{2,i,n} = \Prox_{\sigma_i l_i^*}\left(w_{2,i,n} + \frac{\sigma_i}{2}L_i (2 z_{1,n} - w_{1,n}) \right) + d_{i,n} \\
					v_{i,n+1} = v_{i,n} + \lambda_n (z_{2,i,n} - p_{2,i,n}). \\
				\end{array} \right. \\ \vspace{-4mm}
		\end{array}
		\right.
		\end{array}
	\end{align}
\end{algorithm}

\begin{remark}\label{rpartcase}
When $l=\delta_{\left\{0\right\}}$, the infimal convolution $g \Box l$ is nothing else than the function $g$. In this situation, the conjugate of $l$ is the function identical to zero and for all $\sigma \in \R_{++}$ one has
$\Prox_{\sigma l^*} = \id$.
\end{remark}

The subsequent theorem was given in \cite[Theorem 3.1]{BotHend12e} and characterizes the convergence behaviour of the sequences generated by Algorithm \ref{risk_alg1}.
\begin{theorem}\label{risk_th0}
Suppose that there exists $x \in \h$ such that
\begin{align}\label{zin1.2}
	0 \in  \partial f(x) +  \sum_{i=1}^m L_i^*((\partial g_i \Box \partial l_i)(L_ix)).
\end{align}
\begin{enumerate}[label={(\roman*)}]
	\setlength{\itemsep}{-2pt}
		\item  \label{th3.01} If
$$\sum_{n=0}^{+\infty}\lambda_n\|a_n\|_{\h} < +\infty, \quad \sum_{n=0}^{+\infty}\lambda_n (\|d_{i,n}\|_{\g_i} + \|b_{i,n}\|_{\g_i} ) < + \infty, i=1,\ldots,m,$$ and
$\sum_{n=0}^{+\infty} \lambda_n (2 - \lambda_n) = +\infty,$
then
		\begin{enumerate}[label={(\alph*)}]
	\setlength{\itemsep}{-2pt}
		\item  \label{risk_th0.1} $(x_n,v_{1,n},\ldots,v_{m,n})_{n\geq 0}$ converges weakly to a point $(\bx,\bv_1,\ldots,\bv_m)\in\h\times\g_1\times\ldots\times\g_m$ such that, when setting
		\begin{align*}
						\bp_1 &= \Prox\nolimits_{\tau f}\left( \bx - \frac{\tau}{2} \sum_{i=1}^m L_i^* \bv_i \right), \\
						\text{and }\bp_{2,i} & =\Prox\nolimits_{\sigma_i g_i^*}\left(\bv_{i} +\frac{\sigma_i}{2} L_i (2\bp_1-\bx) \right), \ i =1,...,m,
		\end{align*}
$\bp_1$ is an optimal solution to the primal problem $(P)$, $(\bp_{2,1},\ldots,\bp_{2,m})$ is an optimal solution to the dual problem $(D)$ and $v(P) = v(D)$.
		\item  \label{risk_th0.2}$\lambda_n (z_{1,n}-p_{1,n}) \rightarrow 0 \  (n \rightarrow +\infty)$ and  $\lambda_n(z_{2,i,n}-p_{2,i,n}) \rightarrow  0 \  (n \rightarrow +\infty)$ for $i=1,...,m$.
		\item  \label{risk_th0.3} whenever $\h$ and $\g_i, i=1,...,m,$ are finite-dimensional Hilbert spaces, $a_n \rightarrow 0 \  (n \rightarrow +\infty)$ and $b_{i,n} \rightarrow 0 \  (n \rightarrow +\infty)$ for $i=1,...,m$, then $(p_{1,n})_{n \geq 0}$ converges strongly to an optimal solution to $(P)$ and  $(p_{2,1,n},\ldots,p_{2,m,n})_{n \geq 0}$ converges strongly to an optimal solution to $(D)$.
		\end{enumerate}
		\item  \label{risk_th0.02}  If
$$\sum_{n=0}^{+\infty}\|a_n\|_{\h} < +\infty, \quad \sum_{n=0}^{+\infty} (\|d_{i,n}\|_{\g_i} + \|b_{i,n}\|_{g_i} ) < + \infty, i=1,\ldots,m, \quad \inf_{n\geq 0} \lambda_n > 0$$
$$\mbox{and} \ f \ \mbox{and} \ g_i^*, i=1,...,m, \ \mbox{are uniformly convex},$$
then $(p_{1,n})_{n \geq 0}$ converges strongly to an optimal solution to the primal problem $(P)$, $(p_{2,1,n},\ldots,p_{2,m,n})_{n \geq 0}$ converges strongly to an optimal solution to the dual problem $(D)$ and $v(P) = v(D)$.
\end{enumerate}
\end{theorem}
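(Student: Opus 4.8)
\medskip
\noindent\textbf{Proof strategy.} This statement is \cite[Theorem 3.1]{BotHend12e}; here I only indicate the route I would take. The plan is to transfer the problem to a product space, reduce $(P)$--$(D)$ to one monotone inclusion, identify Algorithm~\ref{risk_alg1} with an error-tolerant relaxed Douglas--Rachford iteration for that inclusion, and then invoke the known convergence theory for Douglas--Rachford. First I would set $\fK := \h\times\g_1\times\cdots\times\g_m$, write $\fx = (x,v_1,\dots,v_m)$, and consider the operator $M\colon\fK\rightrightarrows\fK$ whose $\h$-component at $\fx$ is $\partial f(x) + \sum_{i=1}^m L_i^* v_i$ and whose $\g_i$-component is $\partial g_i^*(v_i) + \partial l_i^*(v_i) - L_i x$. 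Using that $v_i \in (\partial g_i\Box\partial l_i)(L_i x)$ is equivalent to $L_i x \in \partial g_i^*(v_i) + \partial l_i^*(v_i)$, together with the Fenchel--Young equality and $(g_i\Box l_i)^* = g_i^* + l_i^*$, one checks that $\fx \in \zer M$ precisely when $x$ solves $(P)$, $(v_1,\dots,v_m)$ solves $(D)$ and $v(P) = v(D)$; moreover hypothesis~\eqref{zin1.2} says exactly that $\zer M \neq \varnothing$. Since $\partial f$, $\partial g_i^*$, $\partial l_i^*$ are maximally monotone and the linear coupling in $M$ is skew-adjoint and bounded, $M$ is maximally monotone.

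Next I would split $M = \mathcal A + \mathcal B$, with $\mathcal A = \big(\partial f \oplus \partial g_1^* \oplus\cdots\oplus\partial g_m^*\big) + \tfrac12 S$ and $\mathcal B = \big(0 \oplus \partial l_1^* \oplus\cdots\oplus\partial l_m^*\big) + \tfrac12 S$, where $S$ is the bounded skew-adjoint (hence maximally monotone) operator $S\fx = \big(\sum_i L_i^* v_i,\, -L_1 x,\dots,-L_m x\big)$; both $\mathcal A$ and $\mathcal B$ are then maximally monotone. Renorming $\fK$ by the inner product $\langle V\cdot,\cdot\rangle$ induced by a bounded self-adjoint operator of the form
\[
	V = \begin{pmatrix} \tau^{-1}\id & -\tfrac12 L_1^* & \cdots & -\tfrac12 L_m^* \\ -\tfrac12 L_1 & \sigma_1^{-1}\id & & \\ \vdots & & \ddots & \\ -\tfrac12 L_m & & & \sigma_m^{-1}\id \end{pmatrix},
\]
which is positive definite precisely when $\tau\sum_{i=1}^m\sigma_i\|L_i\|^2 < 4$ --- this is where the step-size restriction enters --- the decisive point is that in the Hilbert space $(\fK,\langle V\cdot,\cdot\rangle)$ the resolvent of $\mathcal A$ unwinds, by a block (triangular) forward substitution, into the computations of $p_{1,n}$ and $p_{2,i,n}$ in \eqref{risk_A1}, the resolvent of $\mathcal B$ into those of $z_{1,n}$ and $z_{2,i,n}$, the quantities $w_{1,n},w_{2,i,n}$ being the associated reflection ($R = 2J - \id$) steps, and $a_n,b_{i,n},d_{i,n}$ being exactly the errors made in the resolvent evaluations. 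Thus Algorithm~\ref{risk_alg1} is nothing but the iteration $\fx_{n+1} = \fx_n + \lambda_n\big(J_{\mathcal B}(2 J_{\mathcal A}\fx_n - \fx_n) - J_{\mathcal A}\fx_n\big)$, with errors, applied to $\zer(\mathcal A + \mathcal B)$ in the $V$-metric.

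With this identification at hand, \ref{th3.01} and \ref{risk_th0.02} follow from the standard behaviour of the error-tolerant Douglas--Rachford method (see \cite{BauCom11}): under summable errors and $\sum_n\lambda_n(2-\lambda_n) = +\infty$, the governing sequence $\fx_n$ converges weakly to some $\fx$ with $J_{\mathcal A}\fx \in \zer(\mathcal A + \mathcal B)$ and $\fx_{n+1} - \fx_n \to 0$; reading this back componentwise yields \ref{risk_th0.1} (the limits $\bp_1$ and $\bp_{2,i}$ being precisely the components of $J_{\mathcal A}\fx$) and \ref{risk_th0.2}. In finite dimensions weak convergence is strong and $\Prox_{\tau f}$, $\Prox_{\sigma_i g_i^*}$ are continuous, so the displayed formulas for $p_{1,n}$ and $p_{2,i,n}$ pass to the limit, giving \ref{risk_th0.3}. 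For \ref{risk_th0.02}, uniform convexity of $f$ and of the $g_i^*$ makes the diagonal part of $\mathcal A$, hence $\mathcal A$ itself, uniformly monotone; combining the Douglas--Rachford Fej\'er inequality, the asymptotic regularity already obtained, and the uniform-monotonicity estimate forces $\|J_{\mathcal A}\fx_n - J_{\mathcal A}\fx\| \to 0$, i.e.\ strong convergence of $(p_{1,n})_{n\geq 0}$ and $(p_{2,i,n})_{n\geq 0}$, while uniform (hence strict) convexity also makes the primal-dual solution unique.

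I expect the main obstacle to be the middle step: checking that the abstract resolvents $J_{\mathcal A}$ and $J_{\mathcal B}$ in the $V$-metric genuinely reduce to the exact half-step proximal updates of \eqref{risk_A1}, reflections included, and that the error sequences pass correctly through these (generally set-valued) identifications under precisely the summability hypotheses stated. Verifying that $V$ is positive definite exactly under $\tau\sum_i\sigma_i\|L_i\|^2 < 4$, and running the weak-to-strong convergence argument under uniform convexity, should then be routine.
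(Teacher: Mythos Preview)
The paper does not prove this theorem; it merely quotes it as \cite[Theorem~3.1]{BotHend12e}. You recognise this at the outset, and the route you sketch --- casting the coupled optimality system as a monotone inclusion on the product space, splitting it into two maximally monotone pieces with a shared skew coupling, renorming by the block operator $V$ (positive definite exactly under $\tau\sum_i\sigma_i\|L_i\|^2<4$), and then reading Algorithm~\ref{risk_alg1} as error-tolerant Douglas--Rachford in the $V$-metric --- is precisely the strategy of the cited reference. So your proposal is correct and aligned with the original source; there is simply nothing in the present paper to compare it against.
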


\subsection{Convex risk measures}
Let $(\Omega,\filt,\P)$ be an atomless probability space, where the elements $\omega$ of $\Omega$ represent future states, or individual scenarios (and are allowed to be only finitely many), $\filt$ is a \textit{$\sigma$-algebra} on measurable subsets of $\Omega$ and $\P$ is a \textit{probability measure} on $\filt$. For a measurable random variable $X:\Omega\to\R \cup \{+\infty\}$ the \emph{expectation
value} with respect to $\P$ is defined by $E[X]:=\int_{\Omega} X(\omega) \d\P(\omega)$. Whenever $X$ takes the value $+\infty$ on a subset of positive measure
we have $\E[X] = +\infty$.  Equalities between random variables are to be interpreted in an \textit{almost surely (a.s.)} way.  Random variables $X:\Omega\to\R \cup \{+\infty\}$ which take a constant value $\lambda\in\R,$ i.e $X=\lambda$ a.s., will be identified with the real number $\lambda$. Similarly, inequalities of the form $X \geq \lambda$, $X \leq \lambda$, $X \leq Y$, etc., are to be viewed in the sense of holding almost surely. By $F_X$ we denote the \textit{distribution function} of $X$, i.\,e. $F_X(\lambda)=\P(X\leq \lambda)$. By taking this into account, \textit{essential supremum} and \textit{essential infimum} of a random variable $X$ are, respectively,
\begin{align*}
	\esssup(X) &= \inf\left\{a \in \R: \P(X>a)=0\right\} = \inf\left\{a \in \R : X \leq a \right\} \\
	\essinf(X) &= - \esssup(-X) = \sup \left\{a \in \R : X \geq a \right\}.
\end{align*}
Each random variable $X$ can be represented as $X=X_+ - X_-$, where $X_+,X_-$ are random variables defined via $X_+(\omega)=\max \{X(\omega),0\}$ and $X_-(\omega)=\max\{-X(\omega),0\}$ for all $\omega \in \Omega$.

Consider further the real Hilbert space
$$L^2:=L^2(\Omega,\filt,\P)= \left\{ X : \Omega \rightarrow \R \cup \{+\infty\} : X \text{ is measurable, } \int_{\Omega} \left|X(\omega)\right|^2 \d\P(\omega) < + \infty \right\}$$
endowed with \textit{inner product} and \textit{norm} defined for arbitrary $X,Y \in L^2$ via
\begin{align*}
	\< X,Y \> = \int_{\Omega} X(\omega)Y(\omega) \d\P(\omega) \ \mbox{and} \ \left\|X\right\| = \left(\<X,X\>\right)^{\frac{1}{2}} = \left( \int_{\Omega} \left(X(\omega)\right)^2 \d\P(\omega) \right)^{\frac{1}{2}},
\end{align*}
respectively.

\begin{definition}[Risk functions]
A proper function $\rho: L^2 \rightarrow \oR$ is called \textit{risk function}. The risk function $\rho$ is said to be
\begin{enumerate}[label={(\roman*)}]
	\setlength{\itemsep}{-2pt}
		\item {\em convex}, if $\rho(\lambda X+(1-\lambda)Y) \leq \lambda \rho(X) + (1-\lambda)\rho(Y)$ for all $\lambda\in(0,1)$, $X,Y \in L^2$;
		\item {\em positively homogeneous}, if $\rho(0)=0$ and $\rho(\lambda X)=\lambda \rho(X)$ for all $\lambda \in \R_{++}$, $X \in L^2$;
		\item {\em monotone}, if $X \geq Y$ implies $\rho(X)\leq \rho(Y)$ for all $X,Y \in L^2$;
		\item {\em cash-invariant}, if $\rho(X+c)=\rho(X)-c$ for all $c\in\R$, $X\in L^2$;
		\item a {\em convex risk measure}, if $\rho$ is convex, monotone and cash-invariant;
		\item a {\em coherent risk measure}, if $\rho$ is a positively homogeneous convex risk measure.
\end{enumerate}
\end{definition}
Axioms for coherent risk measures were first given in the literature by Artzner, Delbaen, Eber and Heath in \cite{Artetal99}, while later one, F\"ollmer and Schied considered in  \cite{FoeSch} the convex risk measures, by replacing the sublinearity with the weaker assumption of convexity. More precisely, when the value $\rho(X)$ is understood as a capital requirement for the future net worth $X$, a convex risk measure guarantees that the capital requirement of the convex combination of two positions does not exceed the convex combination of the capital requirements of the positions taken separately. For properties and examples of coherent and convex risk measures we refer to \cite{Artetal99, BenTeb07, BotFra11, FoeSch02, FoeSch02b, LueDoe05, RocUry00, RocUry02, RocUryZab02, RUZ06}.

In our investigations a central role will be played by a generalized convex risk measure associated to the so-called Optimized Certainty Equivalent, which was introduced for concave utility functions in \cite{BenTeb86} and adapted to convex utility functions in \cite{BotFra11}. For the utility functions considered throughout this paper we make the following assumption.

\begin{assumption}[Convex utility function] \label{risk_as1}
Let $u:\R \rightarrow \oR$ be a proper, convex, lower semicontinuous and nonincreasing function such that $u(0)=0$ and $-1 \in \partial u(0)$.
\end{assumption}

In the literature the two conditions imposed on $u$ are known as the \textit{normalization conditions} and are equivalent to $u(0)=0$ and $u(t)\geq -t$ for all $t\in\R$. The generalized convex risk measure we use in order to quantify the risk was given under the name Optimized Certainty Equivalent (OCE) in \cite{BenTeb07} and is defined as (see, also, \cite{BotFra11})
\begin{align}\label{risk_d1}
	\rho_u: L^2 \rightarrow \R \cup \{+\infty\}, \ \rho_u(X) = \inf_{\lambda\in\R}\left\{\lambda + \E\left[u(X+\lambda)\right] \right\}.
\end{align}
By Assumption \ref{risk_as1}, it follows that $\rho_u(X) \geq - \E\left[X\right]$ for every $X\in L^2$ and that $\rho_u$ fulfills the requirements of being a convex risk measure.

\section{Solving a general portfolio optimization problem}\label{risk_sectionApproach}

Consider a portfolio with a number of $N \geq 1$ different positions with returns $R_i \in L^2$, $i=1,\ldots,N$, a nonzero vector of expected returns $\mu=(\E\left[R_1\right],\ldots, \E\left[R_N\right])^T$ and $\mu^* \leq \max_{i=1,...,N} \E\left[R_i\right]$ a given lower bound for the expected return of the portfolio. In this section we discuss the employment of Algorithm \ref{risk_alg1} when solving for different convex utility functions the optimization problem
\begin{align}
\label{risk_p2}
\inf_{\substack{x^T\mu\geq \expreturn,\ x^T\ones^N =1,\\x=(x_1,...,x_N)^T \in \R^N_+}} \rho_u\left(\sum_{i=1}^N x_iR_i\right),
\end{align}
which assumes the minimization of the risk of the portfolio subject to constraints on the expected return of the portfolio and on the budget. Here, $\ones^N$ denotes the vector in $\R^N$ having all entries equal to $1$. By using \eqref{risk_d1}, we obtain the following reformulation of the problem \eqref{risk_p2}
\begin{align}
\label{risk_p3}
\inf_{\substack{x^T\mu\geq \expreturn,\ x^T\ones^N =1,\\x=(x_1,...,x_N)^T \in \R^N_+,\ \lambda \in \R}} \left\{\lambda + \E\left[ u\left(\sum_{i=1}^N x_iR_i+\lambda\right)\right] \right\},
\end{align}
which will prove to be more suitable for being solved by means of the primal-dual proximal splitting algorithm presented in the previous section. In this sense, the following result, which relates the optimal solutions of the two optimization problems is of certain importance.
\begin{proposition}\label{risk_th4}
The following statements are true.
\begin{itemize}
\item[(a)] If $(\bx, \overline \lambda)$ is an optimal solution to \eqref{risk_p3}, for $\bx = (\bx_1,\ldots, \bx_N)^T$, then $\bx$ is an optimal solution to \eqref{risk_p2}.

\item[(b)] If $\bx = (\bx_1,\ldots, \bx_N)^T$ is an optimal solution to \eqref{risk_p2} and
$$\overline \lambda \in \argmin_{\lambda \in \R} \left \{\lambda + \E\left[ u\left(\sum_{i=1}^N \bar x_iR_i+\lambda\right)\right]\right\},$$
then $(\bx, \overline \lambda)$ is an optimal solution to \eqref{risk_p3}.
\end{itemize}
\end{proposition}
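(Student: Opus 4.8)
The plan is to exploit that problem \eqref{risk_p3} is nothing but problem \eqref{risk_p2} with the infimum defining $\rho_u$ in \eqref{risk_d1} written out explicitly, so that a routine interchange of nested infima shows that the two problems have the same optimal objective value, after which both assertions follow by tracking which inequalities are tight. Denote by $C := \{x=(x_1,\dots,x_N)^T \in \R^N_+ : x^T\mu \geq \expreturn,\ x^T\ones^N = 1\}$ the feasible set of portfolio weights common to \eqref{risk_p2} and \eqref{risk_p3}, and for $x \in C$ write $X_x := \sum_{i=1}^N x_i R_i$, which lies in $L^2$ since each $R_i$ does and the combination is finite. First I would record, directly from \eqref{risk_d1}, that
\[
  \inf_{(x,\lambda)\in C\times\R}\Big\{\lambda + \E\big[u(X_x+\lambda)\big]\Big\} = \inf_{x\in C}\Big(\inf_{\lambda\in\R}\big\{\lambda+\E[u(X_x+\lambda)]\big\}\Big) = \inf_{x\in C}\rho_u(X_x),
\]
so that \eqref{risk_p3} and \eqref{risk_p2} share one and the same optimal value, call it $v^*$. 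Here I would observe that by Assumption \ref{risk_as1} one has $\rho_u(X_x)\ge -\E[X_x] > -\infty$ for every $x\in C$, so no term above equals $-\infty$ and the rearrangement of infima is legitimate.

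For part (a), starting from an optimal solution $(\bx,\overline\lambda)$ of \eqref{risk_p3}, I would note that $\bx\in C$ and form the chain $v^* = \overline\lambda + \E[u(X_{\bx}+\overline\lambda)] \ge \inf_{\lambda\in\R}\{\lambda+\E[u(X_{\bx}+\lambda)]\} = \rho_u(X_{\bx}) \ge \inf_{x\in C}\rho_u(X_x) = v^*$; since the two ends coincide, every inequality is an equality, whence $\rho_u(X_{\bx}) = v^*$ and $\bx$ solves \eqref{risk_p2}.

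For part (b), given an optimal $\bx$ of \eqref{risk_p2} and $\overline\lambda \in \argmin_{\lambda\in\R}\{\lambda+\E[u(X_{\bx}+\lambda)]\}$, I would note that $(\bx,\overline\lambda)$ is feasible for \eqref{risk_p3} and that $\overline\lambda + \E[u(X_{\bx}+\overline\lambda)] = \inf_{\lambda\in\R}\{\lambda+\E[u(X_{\bx}+\lambda)]\} = \rho_u(X_{\bx}) = v^*$, so $(\bx,\overline\lambda)$ attains the optimal value of \eqref{risk_p3} and is therefore an optimal solution of it.

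I do not expect a genuine obstacle here: the argument is essentially bookkeeping of infima. The only two points I would be careful about are (i) invoking the normalization conditions of Assumption \ref{risk_as1} (equivalently $\rho_u(X)\ge -\E[X]$) to guarantee that $\rho_u(X_x)$ is never $-\infty$, which is what makes the nested-infimum identity and the inequality chains meaningful, and (ii) that in part (b) the existence of a minimizer $\overline\lambda$ of the inner one-dimensional problem is built into the hypothesis, so no attainment result for the OCE is needed.
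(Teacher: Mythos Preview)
Your proof is correct and follows essentially the same approach as the paper's: both arguments boil down to the identity $\inf_{(x,\lambda)\in C\times\R}\{\lambda+\E[u(X_x+\lambda)]\}=\inf_{x\in C}\rho_u(X_x)$ and the obvious inequalities between the joint objective and $\rho_u$. The only cosmetic difference is that you name the common optimal value $v^*$ and squeeze, whereas the paper compares directly with a generic feasible point; your extra remark that $\rho_u(X_x)>-\infty$ is a welcome piece of hygiene the paper leaves implicit.
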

\begin{proof}
Denote by $\X=\left\{x\in\R^N_+: x^T\mu\geq \expreturn, \ x^T\ones^N =1 \right\}$.
\begin{itemize}
\item[(a)] Since $(\bx,\overline{\lambda})\in\X\times\R$ is an optimal solution to \eqref{risk_p3}, we have for every $(x,\lambda) \in \X \times \R$
\begin{align*}
	\lambda + \E\left[ u\left(\sum_{i=1}^N x_iR_i+\lambda\right)\right] \geq \overline{\lambda} + \E\left[ u\left(\sum_{i=1}^N \bx_iR_i+\overline{\lambda}\right)\right] \geq \rho_u\left(\sum_{i=1}^N \bx_iR_i\right).
\end{align*}
Passing to the infimum over $\lambda \in \R$ yields
\begin{align*}
	\rho_u\left(\sum_{i=1}^N x_iR_i\right) \geq \rho_u\left(\sum_{i=1}^N \bx_iR_i\right) \ \forall x\in\X,
\end{align*}
hence, $\bx \in \X$ is an optimal solution to \eqref{risk_p2}.

\item[(b)] The conclusion follows by noticing that for every $(x,\lambda) \in \X \times \R$ we have
\begin{eqnarray*}
	\lambda + \E\left[ u\left(\sum_{i=1}^N x_iR_i+\lambda\right)\right] \geq \rho_u\left(\sum_{i=1}^N x_iR_i\right) & \geq & \rho_u\left(\sum_{i=1}^N \bx_iR_i\right)\\
& = &  \overline{\lambda} + \E\left[ u\left(\sum_{i=1}^N \bx_iR_i+\overline{\lambda}\right)\right].
\end{eqnarray*}
\end{itemize}
\end{proof}

\begin{remark}\label{risk_recfunc}
A sufficient condition guaranteeing that
$$\argmin_{\lambda \in \R} \left \{\lambda + \E\left[ u\left(X+\lambda\right)\right]\right\} \neq \varnothing \ \forall X \in L^2$$
was given in \cite[Theorem 4]{BotFra11} and reads
\begin{equation}\label{risk_eqrec}
\{d \in \R: u_\infty(d) = - d\} = \{0\},
\end{equation}
where $u_\infty : \R \rightarrow \overline \R$, $u_\infty(d) = \sup\{u(x+d) - u(x) : x \in \dom u\}$, denotes the \textit{recession function} of the function $u$. Moreover, in the light of the same result, it follows that under \eqref{risk_eqrec}
$$\rho_u(X) = \sup_{\substack{\Xi \in L^2\\ \mathbb{E}(\Xi)=-1}} \left \{\langle X, \Xi \rangle -\mathbb{E}\left[u^{*}(\Xi)\right] \right \} \ \forall X \in L^2,$$
thus $\rho_u$ is lower semicontinuous. Since $\X$ is compact, this further implies that  \eqref{risk_p2} has an optimal solution and, consequently, that
\eqref{risk_p3} has an optimal solution, too.  All particular convex utility functions we deal with in this paper fulfill condition \eqref{risk_eqrec}.
\end{remark}

According to Proposition \ref{risk_th4}, determining an optimal solution to problem \eqref{risk_p3} will lead to an optimal solution to the portfolio optimization problem \eqref{risk_p2}. However, as we will show in the following, problem \eqref{risk_p3} is a particular case of the problem $(P)$, thus it can be solved by Algorithm \ref{risk_alg1}, but also by some other primal-dual proximal splitting methods. In order to show this, let us first consider
the linear (hence continuous) operator
$$K:\R^N \times \R \rightarrow L^2,\ (x_1,\ldots,x_n,\lambda) \mapsto \sum_{i=1}^N x_iR_i + \lambda.$$
In order to determine its adjoint operator $K^*: L^2 \rightarrow \R^N \times \R$ we use that
\begin{align*}
	\< K(x,\lambda),Z \> = \int_{\Omega} \left(\sum_{i=1}^N x_iR_i(\omega) + \lambda \right)Z(\omega) \d \P(\omega)
	&=\sum_{i=1}^N x_i \<R_i,Z\> + \lambda \<1,Z\> \\ &= \<(x,\lambda),K^*Z\>
\end{align*}
for all $(x,\lambda)\in\R^N\times\R$ and all $Z\in L^2$  and get
$$ K^*Z = \left(\<R_1,Z\>, \ldots, \<R_N,Z\>, \E\left[Z\right]\right)^T \ \forall Z \in L^2.$$
Further, by considering the convex and closed sets
\begin{align*}
	S &= \left\{x \in \R^N : x^T\mu \geq \expreturn \right\} \\
	T &= \left\{x \in \R^N : x^T \ones^N= 1 \right\},
\end{align*}
the optimization problem \eqref{risk_p3} can be equivalently written as
\begin{align}
\label{risk_p5}
\inf_{(x,\lambda) \in \R^N\times\R} \left\{ \delta_{\R^N_+}(x) + \lambda + \delta_{S\times\R}(x,\lambda) + \delta_{T\times\R}(x,\lambda) + \left (\E\left[ u \right] \circ K \right)(x,\lambda) \right\}.
\end{align}

It is obvious that the functions $(x,\lambda) \mapsto \delta_{\R^N_+}(x) + \lambda$, $\delta_{S\times\R}$ and $\delta_{T\times\R}$ are proper, convex and lower semicontinuous. Furthermore, in the light of Assumption \ref{risk_as1} and by using Fatou' lemma, it follows that $\E\left[ u \right]$ has these properties, as well. This means that problem \eqref{risk_p5} fits into the formulation of the problem $(P)$.

\begin{remark}\label{slater}
For utility functions fulfilling Assumption \ref{risk_as1} and condition \eqref{risk_eqrec}, we have already seen that the optimization problem \eqref{risk_p5} has an optimal solution. Due to the fact that a Slater-type qualification condition is fulfilled, from here it follows (see \cite{Bot10}) that condition \eqref{zin1.2} in Theorem \ref{risk_th0} holds.
\end{remark}

By having a closer look into the formulation of Algorithm \ref{risk_alg1}, one can notice the exposed role played by the proximal points of the functions occurring in the objective of the problem to be solved. Having these determined, one can easily obtain via \eqref{res-indentity} the proximal points of their conjugates, when needed. It is an easy calculation to see that for $(x,\lambda) \in \R^N \times \R$ and $\gamma \in \R_{++}$ it holds
$$\Prox\nolimits_{\gamma f}(x,\lambda) = \argmin_{(y,\nu)\in\R^N_+\times\R}\left\{\gamma \nu + \frac{1}{2}\left\| (y,\nu)-(x,\lambda) \right\|^2 \right\} = \left(\proj_{\R^N_+}\left(x\right),\lambda-\gamma\right),$$
with $$f:\R^N \times \R \rightarrow \overline \R,\ f(y,\nu) = \delta_{\R^N_+}(y) + \nu,$$
$$\Prox\nolimits_{\gamma \delta_{S\times\R}}(x,\lambda) = \left(\proj_{S}\left(x\right),\lambda\right) \ \mbox{and} \  \Prox\nolimits_{\gamma \delta_{T\times\R}}(x,\lambda) = \left(\proj_{T}\left(x\right),\lambda\right),$$
where (see, for instance, \cite[Example 28.16 and Example 3.21]{BauCom11}),
\begin{align*}
	\proj_{S}(x) = \left\{\begin{array}{ll} x,  & \text{if } x^T\mu \geq \expreturn \\ x + \frac{\expreturn-x^T\mu}{\left\|\mu\right\|^2} \mu, &\text{otherwise} \end{array} \right. \quad \mbox{and} \quad
	\proj_{T}(x) = x + \frac{1-x^T\ones^N}{N}\ones^N.
\end{align*}
As we show below, in order to determine the proximal points of $\E\left[ u \right]$ one needs more intricate arguments.
\begin{proposition}
\label{risk_th6.1}
For arbitrary random variables $X\in L^2$ and $\gamma \in \R_{++}$ it holds
\begin{align}
		\Prox\nolimits_{\gamma \E\left[ u \right]}(X)(\omega) = \Prox\nolimits_{\gamma u}\left(X(\omega)\right) \ \forall \omega \in \Omega \text{ a.\,s.}.
\end{align}
\end{proposition}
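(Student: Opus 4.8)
The plan is to exhibit the function $Z:\Omega\to\R$ given by $Z(\omega):=\Prox_{\gamma u}(X(\omega))$ and to verify that it is exactly the (unique) minimizer in the problem \eqref{prox-def} defining $\Prox_{\gamma\E[u]}(X)$, namely of
\[
\inf_{Y\in L^2}\Big\{\gamma\,\E\big[u(Y)\big]+\frac{1}{2}\|Y-X\|^2\Big\}.
\]
Since it was already observed above that $\E[u]\in\Gamma(L^2)$, the operator $\Prox_{\gamma\E[u]}$ is well defined, and once $Z$ is shown to attain this infimum, uniqueness of the proximal point forces $Z=\Prox_{\gamma\E[u]}(X)$ a.s., which is precisely the assertion.

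For the execution, I would first check that $Z$ is a legitimate element of $L^2$: because $u\in\Gamma(\R)$, the scalar operator $\Prox_{\gamma u}:\R\to\R$ is single-valued and nonexpansive (cf. \cite{BauCom11}), hence continuous, so $Z=\Prox_{\gamma u}\circ X$ is measurable, and on the full-measure set $\{X<+\infty\}$ nonexpansiveness gives $|Z(\omega)|\le|\Prox_{\gamma u}(0)|+|X(\omega)|$, whose right-hand side is in $L^2$. The key point is then that the objective above is an integral functional that decouples over $\omega$. Indeed, by the scalar optimality built into the proximal point, for every $\omega$ with $X(\omega)\in\R$ and every $t\in\R$,
\[
\gamma u(Z(\omega))+\frac{1}{2}\big(Z(\omega)-X(\omega)\big)^2\ \le\ \gamma u(t)+\frac{1}{2}\big(t-X(\omega)\big)^2 .
\]
Taking $t=0$ and using $u(0)=0$ yields $\gamma u(Z(\omega))\le\frac{1}{2}X(\omega)^2$, while the normalization $u(t)\ge-t$ for all $t\in\R$ gives $u(Z(\omega))\ge-Z(\omega)$; these two bounds sandwich $u\circ Z$ between integrable functions, so $\E[u(Z)]$ is finite and the objective value at $Z$ is finite. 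Finally, for an arbitrary $Y\in L^2$ I would apply the displayed inequality pointwise with $t=Y(\omega)$ and integrate with respect to $\P$ (the integral on the right being well defined in $\R\cup\{+\infty\}$, since $u(Y(\omega))\ge-Y(\omega)$ with $-Y\in L^1$), which gives
\[
\gamma\,\E\big[u(Z)\big]+\frac{1}{2}\|Z-X\|^2\ \le\ \gamma\,\E\big[u(Y)\big]+\frac{1}{2}\|Y-X\|^2 .
\]
Hence $Z$ solves \eqref{prox-def} for $\gamma\E[u]$ at $X$, so $Z=\Prox_{\gamma\E[u]}(X)$ a.s.

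The only genuinely delicate points are the measurability and square-integrability of $Z$ (both handled by nonexpansiveness of $\Prox_{\gamma u}$) and the verification that $\E[u(Z)]$ is finite, which is needed so that no $\infty-\infty$ ambiguity arises in the functional; once it is recognized that the functional separates over $\omega$, minimality of $Z$ is an immediate consequence of the scalar minimality defining $\Prox_{\gamma u}$. An alternative and equally short route would invoke the interchange formula $\partial\E[u](Z)=\{W\in L^2:W(\omega)\in\partial u(Z(\omega))\text{ a.s.}\}$ for the subdifferential of an integral functional, combined with the optimality characterization $X-Z\in\gamma\,\partial\E[u](Z)$; this works just as well but relies on importing that (standard) result, whereas the pointwise comparison above is self-contained.
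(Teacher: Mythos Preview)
Your proof is correct and takes a genuinely different route from the paper. The paper's argument is essentially a one-liner: after writing
\[
\Prox\nolimits_{\gamma\E[u]}(X)=\argmin_{Y\in L^2}\int_\Omega\Big(\gamma u(Y(\omega))+\tfrac{1}{2}(Y(\omega)-X(\omega))^2\Big)\d\P(\omega),
\]
it invokes the interchange of integration and minimization from \cite[Theorem~14.60]{RocWets98}, which immediately gives the pointwise formula. You instead construct the candidate $Z(\omega)=\Prox_{\gamma u}(X(\omega))$ explicitly, check by hand that $Z\in L^2$ and $\E[u(Z)]\in\R$ (using nonexpansiveness of the scalar prox together with the normalization conditions $u(0)=0$ and $u(t)\ge -t$), and then deduce global minimality of $Z$ by integrating the scalar optimality inequality, finishing via uniqueness of the proximal point. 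Your approach is fully self-contained and makes the measurability and integrability issues explicit, at the cost of more writing; the paper's approach is concise but outsources exactly these technicalities to the Rockafellar--Wets interchange theorem. The alternative you sketch via the subdifferential formula for integral functionals is a legitimate third route as well.
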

\begin{proof}
We have
\begin{align*}
	\Prox\nolimits_{\gamma \E\left[ u \right]}(X) &= \argmin_{Y\in L^2}\left\{ \gamma\E\left[u(Y)\right] + \frac{1}{2}\left\| Y-X \right\|^2 \right\} \\
	&= \argmin_{Y\in L^2}\left\{ \gamma\int_{\Omega}u(Y(\omega)) \d\P(\omega) + \frac{1}{2} \int_{\Omega} (Y(\omega)-X(\omega))^2 \d\P(\omega)  \right\} \\
	&= \argmin_{Y\in L^2} \int_{\Omega} \left (\gamma u(Y(\omega)) + \frac{1}{2}(Y(\omega)-X(\omega))^2\right ) \d\P(\omega).
	\end{align*}
Hence, using the interchangeability of integration and minimization (see \cite[Theorem 14.60]{RocWets98}), we have
\begin{align*}
	\Prox\nolimits_{\gamma \E\left[ u \right]}(X)(\omega) = \argmin_{y\in\R}\left\{\gamma u(y) + \frac{1}{2}\left(y-X(\omega)\right)^2 \right\} = \Prox\nolimits_{\gamma u}\left(X(\omega)\right) \ \forall \omega \in \Omega \text{ a.\,s.}.
	\end{align*}
\end{proof}

In what follows we provide explicit formulae for the proximal points of some popular convex utility functions considered the literature, which will be of importance for the numerical experiments presented in the last section and which involve the convex risk measures which rely on them.

\subsection{Piecewise linear utility}
For $\gamma_2<-1<\gamma_1\leq 0$ we consider the piecewise linear utility function
$$u_1:\R \rightarrow \R,\ u_1(t)=\left\{ \begin{array}{ll} \gamma_2t, & \text{if }t\leq0 \\ \gamma_1 t, &\text{if }t>0 \end{array} \right. = \gamma_1 \left[t\right]_{+} - \gamma_2 \left[t\right]_{-}. $$
Assumption \ref{risk_as1} is fulfilled since $u_1(0)=0$ and $-1 \in \partial u_1(0)=\left[\gamma_2,\gamma_1\right]$ and, since for all $d \in \R$ (see \cite{BotFra11})
\begin{equation*}
(u_1)_\infty(d)=\left\{
\begin{array}{ll}
\gamma_2d, & \mbox {if} \  d < 0,\\
0, & \mbox {if} \  d = 0,\\
\gamma_1d, & \mbox {if} \  d > 0,
\end{array}\right.
\end{equation*}
condition \eqref{risk_eqrec} is fulfilled, as well. Hence, $u_1$ gives rise to the lower semicontinuous coherent risk measure
\begin{align}
  \label{risk_eq6}
	\rho_{u_1}(X) = \inf_{\lambda\in\R}\left\{\lambda + \gamma_1\E\left[ X+\lambda\right]_{+} - \gamma_2\E\left[X+\lambda\right]_{-} \right\} \ \forall X \in L^2.
\end{align}
For every $\gamma \in \R_{++}$ and $t \in \R$ it holds
\begin{align*}
	 \Prox\nolimits_{\gamma u_1}\left(t\right) &= \argmin_{s\in\R}\left\{\gamma \left(\gamma_1 \left[s\right]_{+} - \gamma_2 \left[s\right]_{-}\right) + \frac{1}{2}\left(s-t\right)^2 \right\} \\
	&= \left\{\begin{array}{ll} t - \gamma\gamma_2, &\text{if } t<\gamma\gamma_2\\
															0, &\text{if }t \in \left[\gamma\gamma_2,\gamma\gamma_1\right] \\
															t - \gamma\gamma_1, &\text{if }t>\gamma\gamma_1\end{array}\right. \\
	&= \left[t - \gamma\gamma_1\right]_+ - \left[t - \gamma\gamma_2\right]_-.
\end{align*}
When setting $\gamma_1=0$ and $\gamma_2=-\frac{1}{1-\alpha}$ for some $\alpha \in (0,1)$, the convex risk measure \eqref{risk_eq6} becomes the classical so-called \textit{Conditional Value-at-Risk at level $\alpha$} (see, for example, \cite{RocUry00, RocUry02})
\begin{align}
	\label{risk_eq9}
	\cvar\nolimits_{\alpha}:L^2 \rightarrow \R,\ \cvar\nolimits_{\alpha}(X) = \inf_{\lambda \in \R}\left\{\lambda +\frac{1}{1-\alpha}\E\left[X+\lambda\right]_{-} \right\}.
\end{align}
The infimum in the in the expression of the Conditional Value-at-Risk is attained for every $X \in L^2$ at the so-called \textit{Value-at-Risk at level $\alpha$}, i.e.,
$$ \varisk\nolimits_{\alpha}(X) = \argmin_{\lambda\in\R}\left\{\lambda +\frac{1}{1-\alpha}\E\left[X+\lambda\right]_{-} \right\}.$$

\subsection{Exponential utility function}
Consider the exponential utility function $u_2 : \R \rightarrow \R$, $u_2(t) = \exp(-t)-1$. It fulfills Assumption \ref{risk_as1} and, since $(u_2)_{\infty} = \delta_{[0,+\infty)}$, condition \eqref{risk_eqrec} is fulfilled, as well.  It gives rise via \eqref{risk_d1} to the so-called \textit{entropic risk measure}
\begin{align}
  \label{risk_eq7}
	\rho_{u_2}(X) = \inf_{\lambda\in\R}\left\{\lambda + \E\left[ \exp(-X-\lambda) -1 \right] \right\} \ \forall X \in L^2,
\end{align}
which is a lower semicontinuous convex risk measure. For arbitrary $\gamma \in \R_{++}$ and $t \in \R$ it holds
\begin{align*}
	 \Prox\nolimits_{\gamma u_2}\left(t\right) = \argmin_{s\in\R}\left\{\gamma (\exp(-s)-1) + \frac{1}{2}\left(s-t\right)^2 \right\}
\end{align*}
Although no closed form expression for the proximal points of $\gamma u_2$ can be given, these can be efficiently calculated by applying Newton's method under the use of previous iterates as starting points.

\subsection{Indicator utility function}
By choosing the utility function $u_3:\R \rightarrow \oR$, $u_3(t)=\delta_{\left[0,+\infty\right)}(t)$, one has $(u_3)_{\infty} = \delta_{[0,+\infty)}$, thus,  both Assumption \ref{risk_as1} and condition \eqref{risk_eqrec} are fulfilled. It gives rise to the so-called \textit{worst-case risk measure}
\begin{align}
  \label{risk_eq8}
	\rho_{u_3}(X) = \inf_{\substack{\lambda\in\R \\ X+\lambda \geq 0}} \lambda \ = - \essinf X = \esssup(-X) \ \forall X \in L^2,
\end{align}
which is a lower semicontinuous convex risk measure. For arbitrary $\gamma \in \R_{++}$ and $t \in \R$ it holds
\begin{align*}
	 \Prox\nolimits_{\gamma u_3}\left(t\right) = \argmin_{s\in\R}\left\{\gamma \delta_{\left[0,+\infty\right)}(s) + \frac{1}{2}\left(s-t\right)^2 \right\}
	=\proj_{\left[0,+\infty\right)}(t).
\end{align*}

\subsection{Quadratic utility function}
For a fixed $\beta \in \R_{++}$ we consider the quadratic utility function
$$u_4:\R \rightarrow \R,\  u_4(t)= \left\{\begin{array}{ll} \frac{\beta}{2}t^2-t, &\text{if } t\leq \frac{1}{\beta} \\ -\frac{1}{2\beta}, & \text{if }t>\frac{1}{\beta} \end{array}\right..$$
Obviously, $(u_4)_{\infty} = \delta_{[0,+\infty)}$, thus,  both Assumption \ref{risk_as1} and condition \eqref{risk_eqrec} are also fulfilled for this utility function. For arbitrary $\gamma \in \R_{++}$ and $t \in \R$, it holds
\begin{align*}
	 \Prox\nolimits_{\gamma u_4}\left(t\right) = \argmin_{s\in\R}\left\{\gamma u_4(s) + \frac{1}{2}\left(s-t\right)^2 \right\}
	=\left\{\begin{array}{ll} \frac{t+\gamma}{1+\gamma\beta}, &\text{if }t\leq \frac{1}{\beta} \\ t, &\text{if }t>\frac{1}{\beta} \end{array}\right..
\end{align*}

\subsection{Logarithmic utility function}
For $\theta \in \R_{++}$, we consider the logarithmic utility function
$$u_5:\R \rightarrow \oR,\  u_5(t)= \left\{\begin{array}{ll} -\theta \ln \left(1+ \frac{t}{\theta}\right), &\text{if } t> -\theta \\ +\infty, & \text{if }t \leq -\theta \end{array}\right..$$
For this special utility function, one can also show that $(u_5)_{\infty}=\delta_{\left[0,+\infty\right)}$, hence that \eqref{risk_eqrec} is fulfilled. The properties in Assumption \ref{risk_as1} hold as well and therefore, via \eqref{risk_d1}, we obtain the convex risk measure
\begin{align*}
	\rho_{u_5}(X) = \inf_{\substack{\lambda\in\R \\ X+\lambda>-\theta}}\left\{\lambda - \theta \E\left[ \ln\left(1 + \frac{X+\lambda}{\theta}\right) \right] \right\} \ \forall X \in L^2.
\end{align*}
The proximal points of the logarithmic utility function take an explicit expression. For arbitrary $\gamma \in \R_{++}$ and $t \in \R$, it holds
\begin{align*}
	 \Prox\nolimits_{\gamma u_5}\left(t\right) = \argmin_{\substack{s\in\R\\s>-\theta}}\left\{-\gamma\theta\ln\left(1+\frac{s}{\theta}\right) + \frac{1}{2}\left(s-t\right)^2 \right\}
	=\frac{t-\theta}{2} + \sqrt{\frac{(\theta-t)^2}{4}+\theta(\gamma+t)}.
\end{align*}

\section{An alternative approach for CVaR}\label{risk_sectionApproachDual}

In this section we propose an alternative approach for solving the portfolio optimization problem \eqref{risk_p2} when the risk measure in the objective is the Conditional Value-at-Risk at a given confidence level $\alpha \in (0,1)$. To this aim we work in a discrete probability space with $\Omega$ finite, which is the natural framework in real-life applications. We denote by $\left|\Omega\right|$ the \textit{cardinal} of the set $\Omega$.
Thus, the probability measure $\P$ can be represented as a vector $(p_1,...,p_{\left|\Omega\right|}) \in \R^{|\Omega|}$ with $p_i \geq 0, i=1,...,|\Omega|$, and $\sum_{i=1}^{|\Omega|} p_i=1$ and the space of random variables $L^2$ can be identified with the finite-dimensional space $\R^{|\Omega|}$. The investigations made in this section rely on the following dual representation of the Conditional Value-at-Risk given in \cite{LueDoe05,RocUryZab02}, namely, for every $X  \in \R^{|\Omega|}$ it holds
\begin{align}
	\label{risk_def12}
	\cvar\nolimits_{\alpha}(X) = \sup_{q \in \q} -q^TX,
\end{align}
where
\begin{align}
	\label{risk_def13}
	\q = \left\{q \in \R^{\left|\Omega\right|}: \sum_{i=1}^{\left|\Omega\right|}q_i=1,\ 0\leq q_i \leq \frac{p_i}{1-\alpha}, \ i=1,\ldots,\left|\Omega\right|\right\}.
\end{align}

By introducing the convex and closed sets
\begin{align*}
	U &= \left\{x \in \R^{\left|\Omega\right|} : \sum_{i=1}^{\left|\Omega\right|}x_i = 1\right\} \\
	V &= \left\{x \in \R^{\left|\Omega\right|} : 0 \leq x_i \leq \frac{p_i}{1-\alpha},\ i=1,\ldots,\left|\Omega\right|\right\},
\end{align*}
we obtain $\q = U \cap V$, hence, for every $X  \in \R^{|\Omega|}$ it holds
\begin{align}
	\label{risk_def14}
	\cvar\nolimits_{\alpha}(X) = \delta_{\q}^*(-X) = \delta_{U \cap V}^*(-X) = \left(\delta_{U}+\delta_{V}\right)^*(-X)
	= \left(\delta_{U}^* \Box \delta_{V}^*\right)(-X),
\end{align}
where the last equality follows from \cite[Theorem 15.3 \& Proposition 15.5]{BauCom11} and the fact that the intersection of the relative interiors of the sets $U$ and $V$ is nonempty.

Thus, for given $R_i \in \R^{|\Omega|}, i=1,...,N,$ the portfolio optimization problem
\begin{align}
\label{risk_p16}
\inf_{\substack{x^T\mu\geq \expreturn\\x^T\ones^N =1,\ x\in\R^N_+}} \cvar\nolimits_{\alpha}\left(\sum_{i=1}^Nx_iR_i\right)
\end{align}
can be equivalently written as
\begin{align}\label{risk_p17}
\inf_{x\in \R^N} \left\{ \delta_{\R^N_+}(x) + \delta_{S}(x) + \delta_{T}(x) + \left (\delta_{U}^* \Box \delta_{V}^*\right)(Rx)\right\},
\end{align}
where $S$ and $T$ are the sets already introduced in the previous section and $R : \R^N \rightarrow \R^{|\Omega|}$ is defined as $R(x_1,...,x_N) = -\sum_{i=1}^Nx_iR_i$.

One can easily notice that the optimization problem \eqref{risk_p16} fits in the formulation of the general convex optimization problem $(P)$, all the extended real-valued functions present in its objective being proper, convex and lower semicontinuous, and thus it can be solved by means of Algorithm \ref{risk_alg1}. We would also like to point out that the primal-dual splitting algorithms proposed in \cite{ComPes12,Vu11} are also designed to solve convex optimization problems involving infimal convolutions, however, they cannot be applied in this situation. This is because they require that one of the two functions occurring in the infimal convolution are strongly convex, which is for the problem \eqref{risk_p17} not the case. For the implementation of Algorithm \ref{risk_alg1} one has only to determine the projections on some simple convex and closed sets, for which one actually has explicit expressions. We would also like to emphasize that, from this point of view, it is p
 referable to work with the sets $U$ and $V$ separately, instead of dealing with their intersection $\q$.

\begin{remark}\label{remfin}
The approach described above can be analogously employed when considering portfolio optimization problems having as objective a weighted sum of Conditional Value-at-Risk functionals taken at different levels of confidence.
\end{remark}

\section{Numerical experiments}\label{risk_sectionApplications}
\subsection{Simulated data}\label{risk_subsecExperiment1}
The first numerical experiments we made followed the scope to compare different approaches for solving the portfolio optimization problem which assumes the quantification of risk by means of the Conditional Value-at-Risk. More precisely, we compared the performances of Algorithm \ref{risk_alg1} when applied in the context of the approaches proposed in the sections \ref{risk_sectionApproach} and  \ref{risk_sectionApproachDual}, but also with the linear programming approach, widely used in this context in the literature. To this end we used synthetic data obtained by creating random returns $R_i \in \R^{|\Omega|}, i=1,...., N$, where $N$ represents the number of assets in the portfolio.

We first solved with the Matlab routine {\ttfamily linprog} the reformulation of \eqref{risk_p16} as a linear program, that can be easily obtained by means of \eqref{risk_eq9}. Then we used the primal-dual method given in Algorithm \ref{risk_alg1} to solve \eqref{risk_p16} via two different approaches, namely, on the one hand, by solving the reformulation \eqref{risk_p5} proposed in Section \ref{risk_sectionApproach} and, on the other hand, by solving the reformulation \eqref{risk_p17} given in Section \ref{risk_sectionApproachDual} and relying on the dual representation of the objective. We terminated the algorithms when subsequent iterates start to stay within an accuracy level of $1\,\%$ with respect to the set of constraints and to the optimal objective value reported by the linear programming solver. Within these examples we used the confidence level $\alpha=0.95$. The algorithms were implemented in Matlab on an Intel Core i$5$-$2400$ processor under Windows 7 (64 Bit).

\begin{table}[ht]
	\centering
		\begin{tabular}{ r | r || c | c | c }
		$\left|\Omega\right|$   & $N$ & LP ({\ttfamily linprog}) & OCE (iterations) & DR (iterations)  \\ \hline \hline
		$1000$  & $100$       & $1.44$s      & $0.07$s($250$)     &  $0.06$s($247$)   \\
		$1000$  & $500$       & $11.81$s     & $3.45$s($1078$)    &  $1.66$s($519$)   \\
		$1000$  & $1000$      & $29.11$s	   & $12.85$s($1772$)   &  $3.94$s($546$)   \\ \hline
		$10000$ & $100$       & $27.18$s     & $1.89$s($252$)     &  $1.36$s($185$)   \\
		$10000$ & $500$       & $248.79$s    & $38.76$s($1087$)   &  $12.48$s($351$)  \\
		$10000$ & $1000$      & $505.66$s	   & $174.05$s($2465$)  &  $27.79$s($394$)
  \end{tabular}
	\caption{\small CPU times in seconds for solving the portolio optimization problem when using the linear programming (LP) approach, the Optimized Certainty Equivalent (OCE) approach and the dual representation (DR) approach.}
	\label{risk_table:experiment1}
\end{table}

The computational results presented in Table \ref{risk_table:experiment1} show that both approaches proposed in this paper outperformed to linear programming approach from the point of view of the time needed to determine an optimal portfolio for different choices of $N$ and $|\Omega|$. When running Algorithm \ref{risk_alg1} we used the following parameters:
\begin{itemize}
\setlength{\itemsep}{-2pt}
	\item OCE: $\sigma_1=50$, $\sigma_2=50$, $\sigma_3=70/\left\|K\right\|_{L^2}$, $\tau=3/(\sigma_1+\sigma_2+\sigma_3\left\|K\right\|_{L^2}^2)$, $\lambda=1.99$;
	\item DR ($\left|\Omega\right|=1000$): $\sigma_1=2$, $\sigma_2=2$, $\sigma_3=0.1/\left\|R\right\|_{L^2}$, $\tau=2/(\sigma_1+\sigma_2+\sigma_3\left\|R\right\|_{L^2}^2)$, $\lambda=1.99$;
	\item DR ($\left|\Omega\right|=10000$): $\sigma_1=0.1$, $\sigma_2=0.1$, $\sigma_3=0.001/\left\|R\right\|_{L^2}$, $\tau=2/(\sigma_1+\sigma_2+\sigma_3\left\|R\right\|_{L^2}^2)$, $\lambda=1.99$.
\end{itemize}

\subsection{Real data}
For the experiments described as follows we took weekly opening courses over the last 13 years from assets belonging to the indices DAX and NASDAQ in order to obtain the returns $R_i \in \R^{|\Omega|}, i=1,...,N$, for $\left|\Omega\right|=689$ and $N=106$. The data was provided by the Yahoo finance database. Assets which do not support the required historical information like Volkswagen AG (DAX) or Netflix, Inc. (NASDAQ) were not taken into consideration.

We solved the portfolio optimization problem \eqref{risk_p2} by taking as objective function the corresponding convex risk measures induced by the linear, exponential, indicator, quadratic and logarithmic utility function. More precisely, we solved with Algorithm \ref{risk_alg1} its equivalent reformulation \eqref{risk_p2} and used to this end the formulae for the proximal points of each utility function given in Section \ref{risk_sectionApproach}. The values of the expected returns associated with $R_i, i=1,...,N$ ranged from $-0.2690$ (Commerzbank AG, DAX) to $1.4156$ (priceline.com Incorporated, NASDAQ).

Table \ref{risk_table:experiment2} collects some computational results when computing by using  Algorithm \ref{risk_alg1} optimal solutions for  the five utility functions. The stopping criterion was the same as in Section \ref{risk_subsecExperiment1}. Table \ref{risk_table:experiment2} shows that Algorithm \ref{risk_alg1} in combination with the worst-case risk measure, i.e., the one induced by the indicator utility function, performed poorly on the given dataset. It also shows that the algorithm is sensitive with respect to the lower bound of the expected return $\expreturn$. When calculating the proximal points of the exponential utility function we used five iterations of Newton's method with previous iterates as starting points to obtain an appropriate approximation.

\begin{table}[htb]
	\centering
		\begin{tabular}{ c || l | l | l | l | l }
		$\expreturn$   & linear ($\alpha=0.95$) & exponential & indicator & quadr. ($\beta=1$) & log. ($\theta=5$)  \\ \hline \hline
		$0.3$  & $0.14$s($500$)       & $0.18$s($402$)     & - ($>15000$)   &  $0.05$s($170$)  & $0.53$s($1891$) \\
		$0.5$  & $0.15$s($520$)       & $0.15$s($336$)     & - ($>15000$)   &  $0.06$s($196$)  & $0.38$s($1335$) \\
		$0.7$  & $0.33$s($1202$)      & $0.31$s($682$)	   & - ($>15000$)   &  $0.06$s($186$)  & $0.72$s($2570$) \\
		$0.9$  & $0.32$s($1164$)      & $0.40$s($885$)     & - ($>15000$)   &  $0.08$s($272$)  & $1.07$s($3820$) \\
		$1.1$  & $0.41$s($1526$)      & $6.80$s($15222$)   & - ($>15000$)   &  $0.14$s($486$)  & $1.18$s($4198$) \\
		$1.3$  & $0.42$s($1570$)      & $5.45$s($12155$)	 & - ($>15000$)   &  $0.41$s($1476$) & $6.61$s($23547$)
  \end{tabular}
	\caption{\small CPU times in seconds and the number of iterations when solving the portfolio optimization problem \eqref{risk_p2} under different utility functions.}
	\label{risk_table:experiment2}
\end{table}

Figure \ref{fig:risk_efficient_frontier} shows the efficient frontiers for problem \eqref{risk_p2}. When using the indicator utility function we stopped the algorithm after a number of $30000$ iterations. The objective value, however, still oscillated in this scenario.

\begin{figure}[htb]	
	\centering
	\captionsetup[subfigure]{position=top}
	\subfloat[linear utility]{\includegraphics*[width=0.48\textwidth]{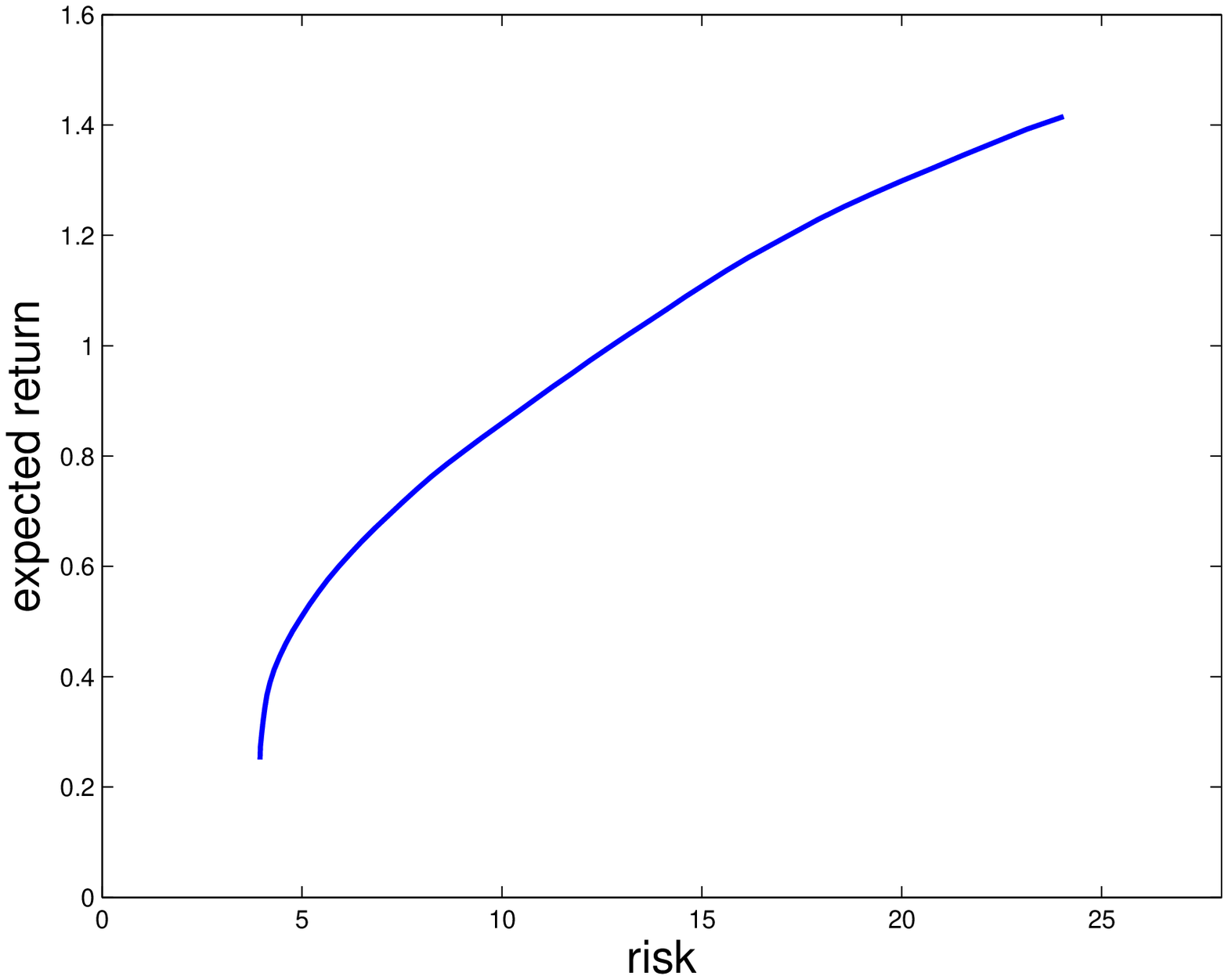}} \hspace{2mm}
	\subfloat[exponential utility]{\includegraphics*[width=0.48\textwidth]{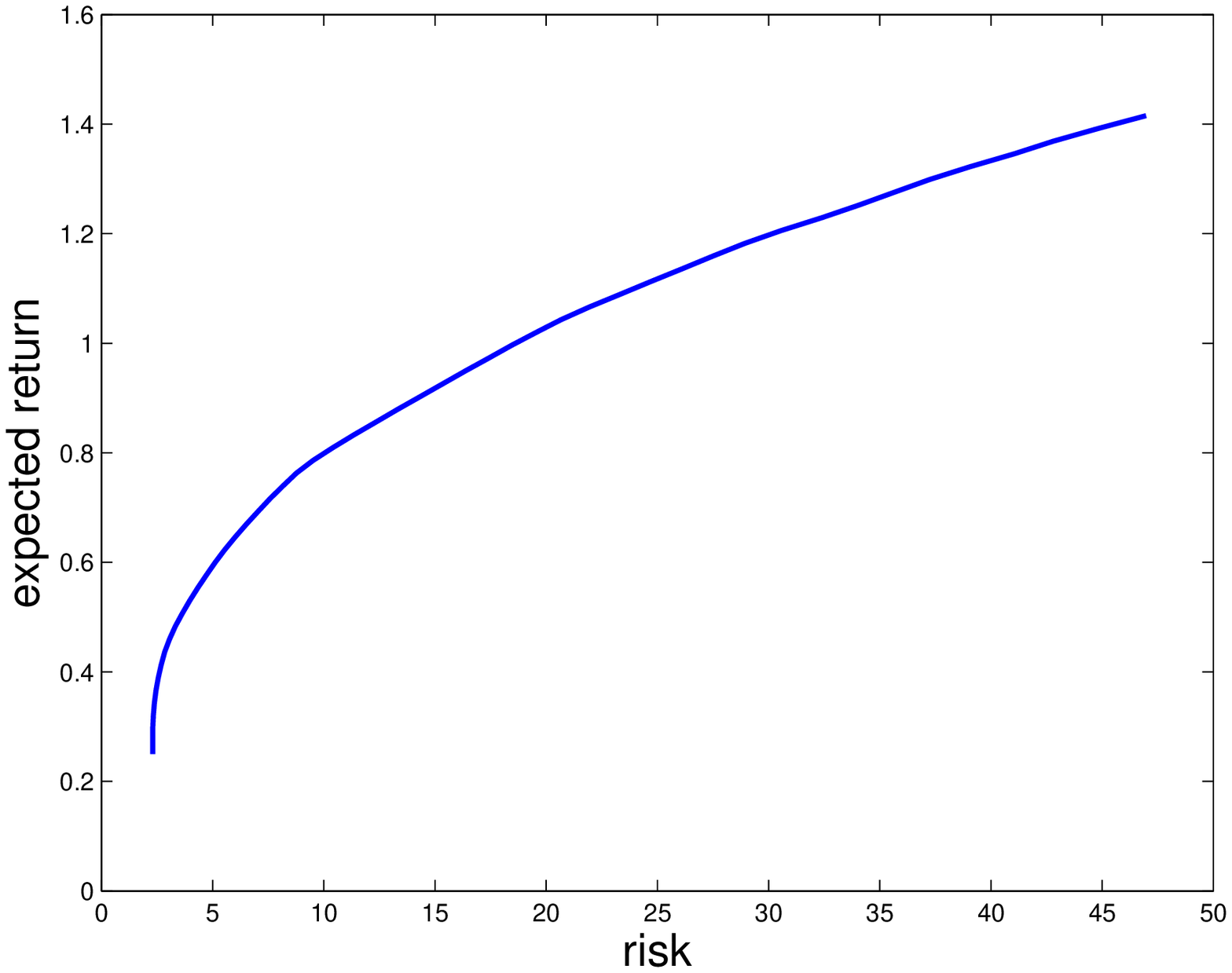}}
	
	\subfloat[indicator utility]{\includegraphics*[width=0.48\textwidth]{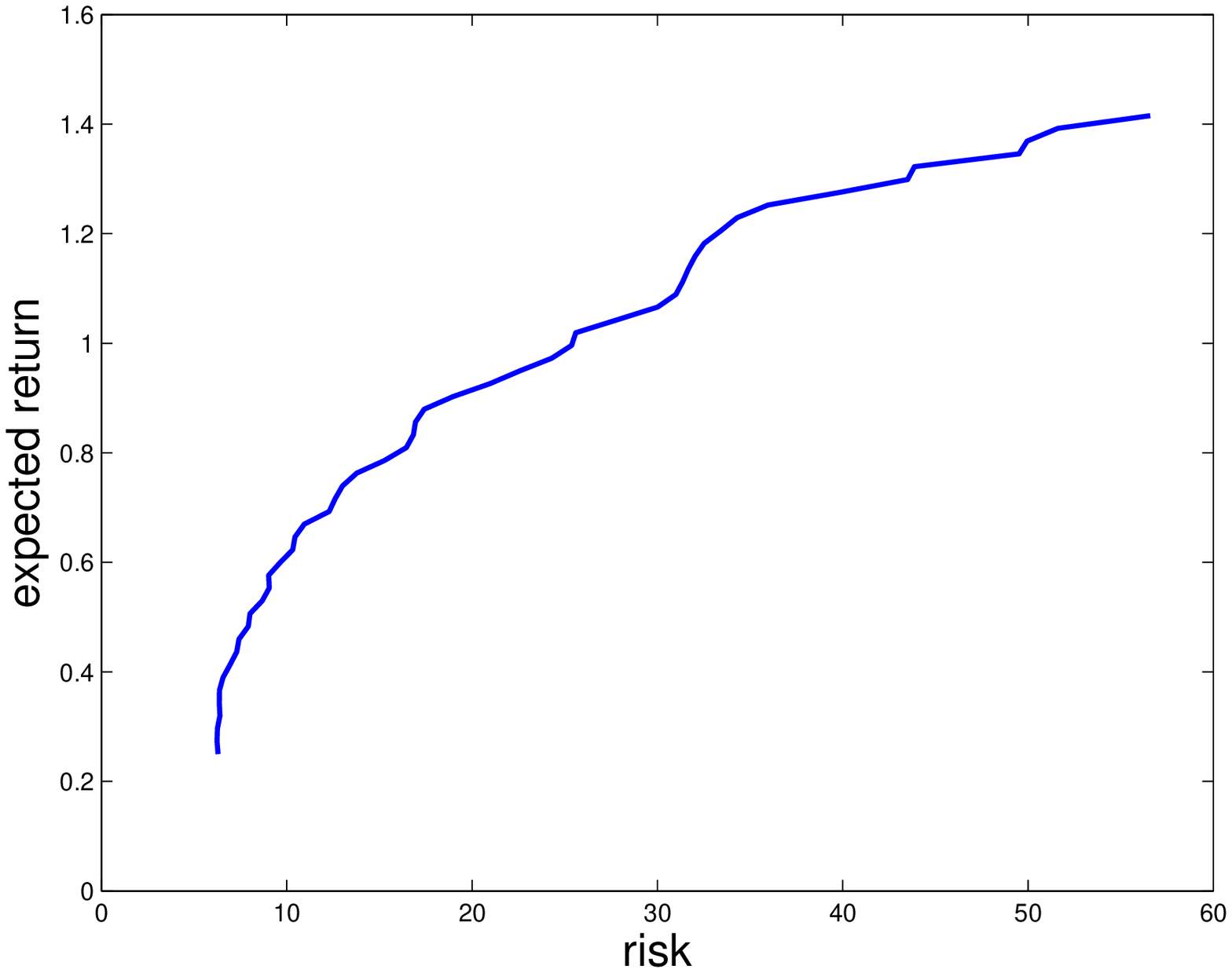}} \hspace{2mm}
	\subfloat[quadratic utility]{\includegraphics*[width=0.48\textwidth]{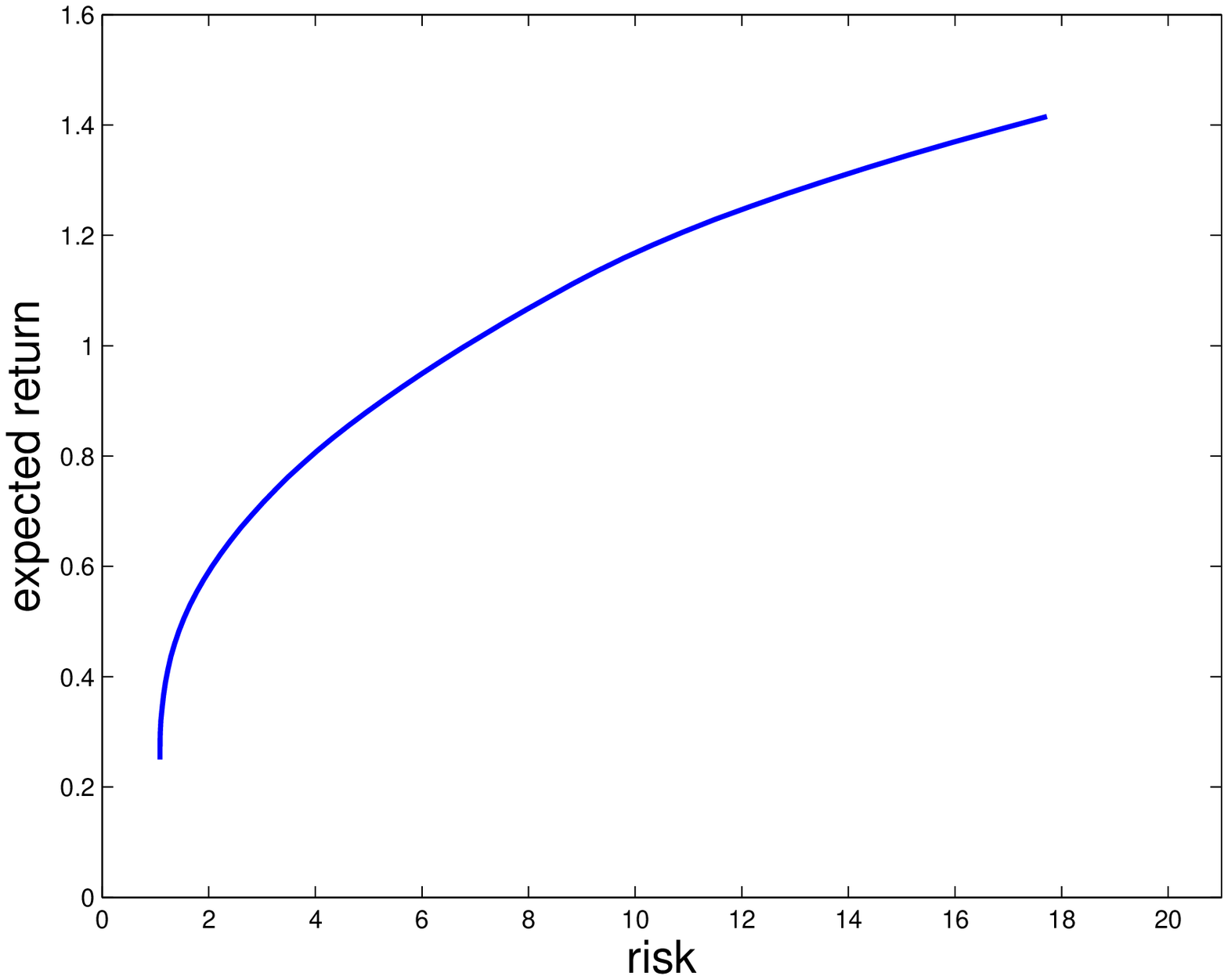}}
	
	\subfloat[logarithmic utility]{\includegraphics*[width=0.48\textwidth]{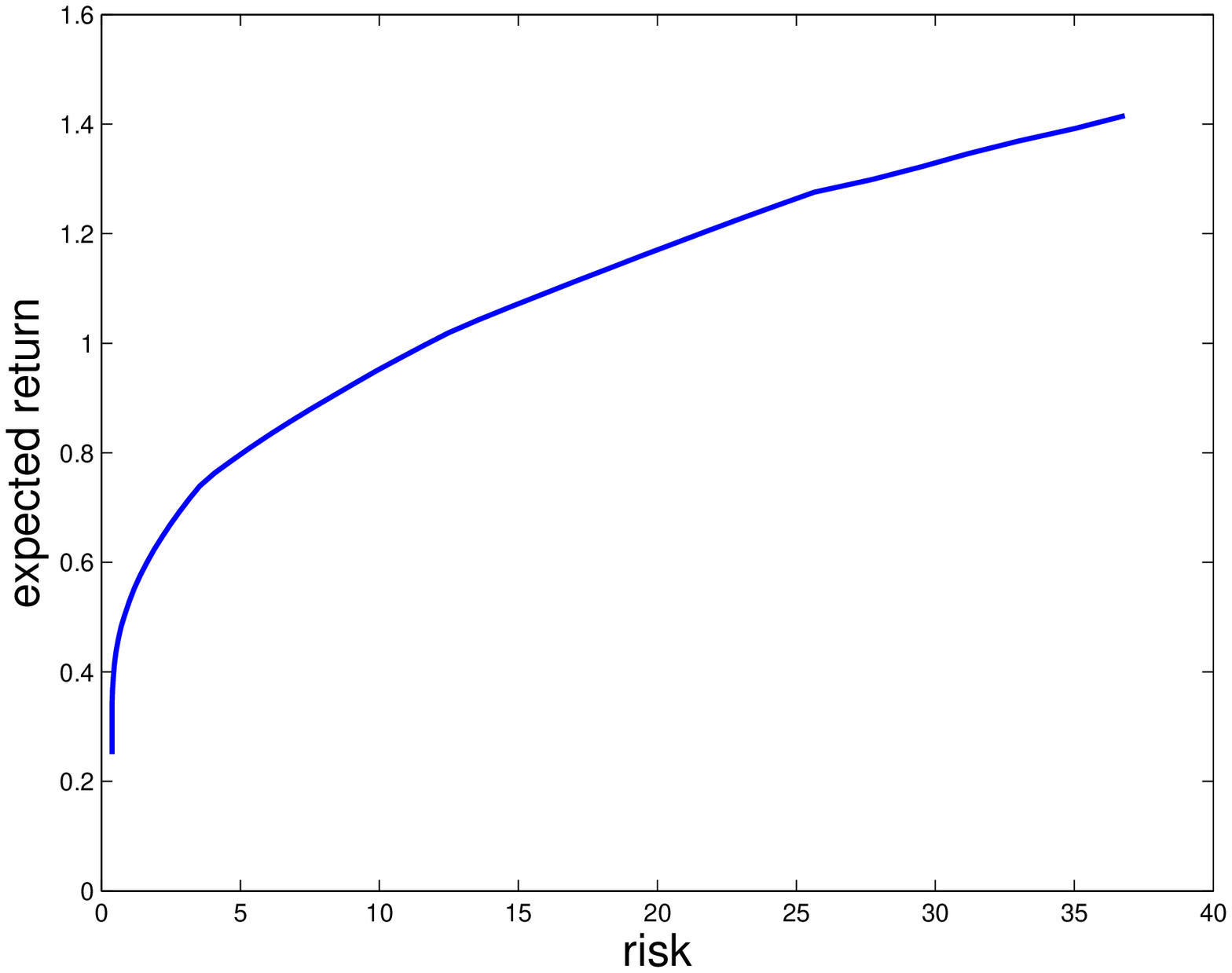}}
	\caption{\small The efficient frontiers for the portfolio optimization problem under different convex risk measurements.}
	\label{fig:risk_efficient_frontier}	
\end{figure}	

\small

\end{document}